\documentclass[10pt]{ijnam}
\hsize=5.5 true in
\textheight=8.4 true in
\topmargin 1in
\usepackage{amsmath,amsfonts,amsthm,amssymb,amsbsy}
\usepackage{subfig}
\newtheorem{thm}{Theorem}[section]
\theoremstyle{definition}

\theoremstyle{plain}
\newtheorem{lem}[thm]{Lemma}
\DeclareMathOperator*{\epi}{\Pi}
\usepackage{mathtools}

\usepackage{graphicx}
\copyrightinfo{2015}{} 



\begin{document}

\title[NC-VEM for convection diffusion reaction equation ]
{Analysis of nonconforming virtual element method for the convection diffusion reaction 
equation with polynomial coefficients}


\author[D. Adak\and  E. Natarajan]{DIBYENDU ADAK \and  E. NATARAJAN}
\address{
  Department of Mathematics,
  Indian Institute of Space Science and Technology ,
  Thiruvananthapuram-695547, INDIA
}
\email{dibyendu.13@iist.ac.in\and thanndavam@iist.ac.in}





\subjclass[2000]{65M60, 65N30, 80M10}

\abstract{In this paper we discuss the application of nonconforming virtual element methods(VEM) 
for the second order diffusion dominated convection diffusion reaction equation. Stability
of the virtual element methods has been proved for the symmetric bilinear form. But the same
analysis cannot be carried out for the non-symmetric case. In this work we present
the external virtual element methods using $L^2$ projection operator and prove the 
well-posedness of VEM for non symmetric bilinear form. We also proved polynomial 
consistency of discrete bilinear form assuming $H^2$ regularity of approximate
solution on each triangle. We have shown optimal convergence estimate in the 
broken sobolev norm.}

\keywords{convection-diffusion, mimitic finite difference, virtual element methods}

\maketitle

\section{INTRODUCTION}
\label{intro}
In recent times the virtual element method has been successfully applied to a 
variety of problems\cite{da2013virtual,brezzi2013virtual}. The basic principle of virtual 
element method has been discussed in \cite{de2014nonconforming,beirao2013basic}. A mimetic 
discretization method with arbitrary polynomial order is presented recently 
in \cite{brezzi2009mimetic}, but the classical finite element framework with 
arbitrary polynomial is still making the 
presentation cumbersome \cite{ciarlet2002finite}. The idea of the virtual element
methods is very similar to mimetic finite difference 
methods \cite{droniou2010unified,da2008higher}. Virtual element space is a 
unisolvent space of smooth functions containing a polynomial subspace. In other way
we can say non conforming virtual element method is a generalization of classical 
nonconforming finite element methods. Very recently it has been clearly 
understood that the degrees of freedom associated to trial/test functions is enough to 
construct finite element framework, which lead to the study of virtual element method. Unlike
classical nonconforming FEM \cite{ciarlet2002finite,gao2011note,brenner2008mathematical,fortin1983non}
VEM has the advantage that virtual element space together with polynomial consistency 
property allows us to approximate the bilinear form without explicit knowledge of basis 
function. 

Stability analysis of virtual element method irrespective of conforming or 
non-conforming is quite different from classical finite element method. If the bilinear form 
is symmetric then we divide it into two parts 
\cite{ahmad2013equivalent,de2014nonconforming}, one is responsible for polynomial consistency 
property and other one for stability analysis. The framework for stability analysis for 
conforming and nonconforming virtual element is almost same. A pioneering work using 
elliptic projection operator has been introduced to approximate symmetric bilinear form 
by Brezzi et al in their papers
\cite{da2013virtual,beirao2013basic}. If the bilinear form is not symmetric like 
convection diffusion reaction form, we can not extend this idea directly which may be 
considered as the drawback of using elliptic projection operator 
\cite{ahmad2013equivalent,brezzi2013virtual}. In this case we may use $L^{2}$ projection 
operator for the modified approximation of bilinear form. The name virtual comes from the 
fact that the local approximation space in each mesh either polygon or polyhedra contains 
the space of polynomials together with some non-polynomial smooth function satisfying the 
weak formulation of model problem. The novelty of this method is to take the spaces and the 
degrees of freedom in such a way that the elementary stiffness matrix can be computed without 
actually computing non-polynomial functions, but just using the degrees of freedom.  

In this paper we have approximated non-symmetric bilinear form using $L^{2}$ projection 
operator. We did not approximate the bilinear form same as \cite{de2014nonconforming} to 
avoid difficulty. Conforming virtual element method using $L^{2}$ projection operator has 
been already discussed for convection diffusion reaction equation with variable 
coefficient. We have considered the nonconforming discretization same as defined 
in \cite{knobloch2003p}. In two dimension, the design of schemes of order of 
accuracy $k\geq 1$ was guided by the patch test \cite{irons1972experience,fortin1983non} 
which enforces continuity at $k$ Gauss-Legendre points on edge. Over the last few 
years, further generalization of non-conforming elements have been still considered by 
several authors, one such generalization to stokes problem is considered 
in \cite{stoyan2006crouzeix,arnold1984stable,comodi1989hellan}.

This article is organized as follows. In Section \ref{sec1} we discuss continuous setting 
of the model problem (\ref{temp2}). In Section \ref{nconf} we have discussed construction of non-conforming virtual element 
briefly since it has been discussed in the following paper\cite{de2014nonconforming}. Also, the 
global and local setting of discrete bilinear formulation has been presented explicitly. In 
Section \ref{step2} we have discussed the construction of bilinear form, stability analysis 
and polynomial consistency property of the discrete bilinear form. Section \ref{step3},
discusses the construction of source term, boundary term and Section \ref{step5} 
provides well-posedness and convergence analysis. Stability
analysis discussed in this paper is applicable only for the diffusion dominated problem. Finally in the last section we have 
shown the optimal convergence rate in broken norm $\parallel.\parallel_{1,h}$.

\subsection{Continuous Problem}
\label{sec1}
In this section we present the basic setting and describe the continuous problem. Throughout 
the paper, we use the standard notation of Sobolev spaces \cite{ciarlet2002finite}. Moreover, for 
any integer $l \geq 0$ and a domain $D\in \mathbb{R}^{m} $ with 
$m \leq d,d=2,3 $, $\mathbb{P}^{l}(D) $ is the space of polynomials of degree at most $l$ 
defined on $D$. We also assume the convention that $\mathbb{P}^{-1}=\{0\}$. Let the domain 
$\Omega $ in $\mathbb{R}^{d} $ with $d=2,3$ be a bounded open polygonal domain with straight 
boundary edges for $d=2$ or a polyhedral domain with flat boundary faces for $d=3$. Let 
us consider the model problem:

\begin{flalign}
 -& \nabla \cdot \left(\mathbf{K}(x) \, \nabla(u) \right)+\boldsymbol{\beta}(x) \cdot \nabla u +c(x) \, u = f(x) \quad \text{in} \quad \Omega, \nonumber \\ 
 & u = g  \quad \text{on} \quad \partial \Omega,
\label{temp2} 
\end{flalign}
where $\mathbf{K} \in (C^{1}(\Omega))^{d\times d}$ is the diffusive 
tensor, $\boldsymbol{\beta}(x) \in (C(\Omega))^{d} $ is the convection 
field, $c\in C(\Omega)$ is the reaction field and $f\in L^{2}(\Omega)$. We assume 
that $(c(x)-\frac{1}{2} \nabla \cdot \boldsymbol{\beta}(x))\geq c_{0}$, where $c_{0}$ is a 
positive constant. This assumption guarantees that (\ref{temp2}) admits a unique 
solution. The diffusive tensor is a full symmetric $d\times d$ sized matrix and strongly 
elliptic, i.e. there exists two strictly positive real constants $\xi$ and $ \eta$ such that 
\begin{center}
$\eta \, |\textbf{v}|^{2} \leq \textbf{v} \, \textbf{K} \, \textbf{v} \leq \xi \, |\textbf{v}|^{2} $
\end{center}
for almost every $ x \in \Omega$ and for any sufficiently smooth vector field 
$v$ defined on $\Omega$, where $|\cdot|$ denotes the standard euclidean norm on 
$\mathbb{R}^{d}$. $\mathbf{K},\boldsymbol{\beta},c$ are chosen to be
polynomials for the present problem.

The weak formulation of the model problem (\ref{temp2}) reads:

Find $u \in V_{g} \, \, \text{such} \, \, \text{that}$
\begin{equation*} A(u,v)=<f,v> \quad \forall \  v \in V \end{equation*}
where the bilinear form $A(\cdot,\cdot):V_{g} \times V \rightarrow \mathbb{R}$ is given by 
 \begin{eqnarray}
 A(u,v) & = &\int_{\Omega} \textbf{K} \, \nabla u \cdot \nabla v +\int_{\Omega}(\boldsymbol{\beta} \cdot \nabla u)v+\int_{\Omega}c \, u \, v \nonumber \\
 <f,v> & = &\int_{\Omega} fv
 \label{temp3}
\end{eqnarray}  
$<\cdot,\cdot>$ denotes the duality product between the functional space $V^{\prime}$ and $V$, where the space $V_{g}$ and $V$ are defined by
\begin{eqnarray}
V_{g} & = &\{ v \in H^{1}(\Omega):v|_{\partial \Omega }=g\} \nonumber \\
V & = &H^{1}_{0}(\Omega) 
\label{tempb5}
\end{eqnarray}
We define the elemental contributions of the bilinear form $A(\cdot,\cdot)$ by
\begin{eqnarray}
a(u,v) & = &\int_{\Omega} \textbf{K} \, \nabla u \cdot \nabla v \  d\Omega \\
b(u,v) & = &\int_{\Omega}(\boldsymbol{\beta} \cdot \nabla u)v\  d\Omega \\
c(u,v) & = &\int_{\Omega}c \, u \, v \ d\Omega 
\end{eqnarray}

Now we can estimate
\begin{eqnarray}
A(v,v) & = &\int_{\Omega}\textbf{K} \, \nabla v \cdot \nabla v+\int_{\Omega}(\boldsymbol{\beta} \cdot \nabla v) \, v+\int_{\Omega}c \, v^{2} \nonumber\\
 & \geq &\eta \, \| \nabla v \|^2_0+\int(c-\frac{1}{2}\nabla \cdot \boldsymbol{\beta}) \, v^{2} \nonumber \\
 & \geq & C \, \| v \|^2_1
 \label{temp4}
\end{eqnarray}
Combining inequality (\ref{temp4}) with the continuity of $A(\cdot, \cdot)$ it follows that there 
exists a unique solution to the variational form of equation (\ref{temp2}).    

\subsection{Basic Setting}
We describe now the basic assumptions of the mesh partitioning and related  
function spaces. Let $\{ \tau_{h}\}$ be a family of decompositions of $\Omega $ into 
elements, $T$ and $ \varepsilon_{h}$ denote a single element and set of edges of a 
particular partition respectively. By $\varepsilon^{0}_{h}$ and 
$\varepsilon^{\partial}_{h}$ we refer to the set of interior and boundary 
edges/faces respectively. We will follow the same assumptions on the family of partitions 
as \cite{de2014nonconforming}.   

\subsection{Assumptions on the family of partitions $\{\tau_{h} \}$}
There exists a positive $\rho >0 $ \ such that
\begin{itemize}
\item[(A1)] for every element $ T$ and for every edge/face $e\subset \partial T$, we 
have $h_{e}\geq \rho \, h_{T} $.
\item[(A2)] every element $T$ is star-shaped with respect to all the points of a sphere of 
radius $\geq \rho \, h_{T}$.
\item[(A3)] for $d=3$, every face $e \in \varepsilon_{h}$ is star-shaped with respect to all the 
points of a disk having radius $\geq \rho \, h_{e} $.
\end{itemize}
The maximum of the diameters of the elements $T \in \tau_{h} $ will be denoted by $h$. For 
every $h>0$, the partition $\tau_{h}$ is made of a finite number of polygons or polyhedra.

We introduce the broken sobolev space for any $s>0$
\begin{equation}
 H^{s}(\tau_{h})=\epi_{T \in \tau_{h}} H^{s}(T)=\left\{v \in L^{2}(\Omega):v|_{T} \in H^{s}(T) \right\}
 \label{temp5}
\end{equation}
and define the broken $ H^{s}$-norm 
\begin{equation}
\| v \|^{2}_{s,\tau_{h}}=\sum_{T \in \tau_{h}} \| v \|^{2}_{s,T} \quad \forall v \in H^{s}(\tau_{h})
\label{temp6}
\end{equation}
In particular for $s=1$
\begin{equation}
\| v \|^{2}_{1,\tau_{h}}=\sum_{T \in \tau_{h}} \| v \|^{2}_{1,T} \quad \forall v \in H^{1}(\tau_{h}) \nonumber
\end{equation}

Let $e \in \varepsilon^{0}_{h}$ be an interior edges and let $T^{+}, T^{-} $ be two 
triangles which share $e$ as a common edge. We denote the unit normal on $e$ in the 
outward direction with respect to $T^{\pm}$ by $ n_{e}^{\pm}$. We then define the 
jump operator as:
\begin{center}
$[\vert v \vert]$ := \, $v^{+} n_{e}^{+} + v^{-}n_{e}^{-}$ \quad on  \quad $ e \in \varepsilon^{0}_{h}$
\end{center}
\begin{center}
and $[\vert v \vert]$ := \, $v \, n_{e}$ \quad on \quad $e \in \varepsilon^{\partial}_{h}$
\end{center}
\subsection{Discrete space}
In this section we will introduce discrete space same as \cite{de2014nonconforming}. For 
an integer $k \geq 1$ we define 
\begin{equation}
H^{1,nc}(\tau_{h};k)=\left\{ v \in H^{1}(\tau_{h}): \int_{e}[| v |] \cdot n_{e} \, q \, ds=0 \ \forall \ q \in P^{k-1}(e), \ \forall \ e \in \varepsilon_{h} \right\} 
\label{temp7}
\end{equation}
We mention that if $v \in H^{1,nc}(\tau_{h};k) $ then $v $ in said to satisfy patch 
test \cite{knobloch2003p} of order $k$. To approximate second order problems we must satisfy the patch test. The 
space $H^{1,nc}(\tau_{h};1)$ is the space with minimal required order of patch test 
 to ensure convergence analysis.
\subsection{Nonconforming virtual element methods}
\label{nconf}
In this section we explain discrete nonconforming VEM framework for the equation
(\ref{temp2}). Before passing from the weak formulation to discrete problem, we first 
apply integration by parts to the convective term $(\boldsymbol{\beta} \cdot \nabla u,v)$ to 
obtain

\begin{equation}
\int_{\Omega} (\boldsymbol{\beta} \cdot \nabla u) v=\frac{1}{2} \left[\int_{\Omega} (\boldsymbol{\beta} \cdot \nabla u) v -\int_{\Omega} (\boldsymbol{\beta} \cdot \nabla v) u -\int_{\Omega}\nabla \cdot \boldsymbol{\beta} \, u \, v \right] \nonumber
\end{equation}
for $u \in H^{1}(\Omega),v\in H^{1}_{0}(\Omega)$ 
 
Bilinear form (\ref{temp3}) can be written as 
\begin{equation}
A(u,v)=\sum_{T \in \tau_{h}} A^{T}(u,v) 
\label{temp8}
\end{equation}
where
\begin{flalign}
A^{T}(u,v) & =\int_{T} \textbf{K} \, \nabla u \cdot \nabla v +b^{\text{conv}}_{T}(u,v)+\int_{T} c \, u \, v \nonumber \\
\text{and} \quad b^{\text{conv}}_{T}(u,v) & = \frac{1}{2} \left[\int_T (\boldsymbol{\beta} 
\cdot \nabla u) v -\int_T (\boldsymbol{\beta} \cdot \nabla v) u - 
\int_{T} \nabla \cdot \boldsymbol{\beta} \, u \, v \right]
\label{temp9}
\end{flalign}
We want to construct a finite dimensional space $ V^{k}_{h} \subset H^{1,nc}(\tau_{h};k) $, a 
bilinear form $A_{h}(\cdot , \cdot):V_{h, g}^{k} \times V_{h}^{k} \rightarrow \mathbb{R}$, and 
an element $f_{h} \in (V^{k}_{h})^{\prime}$ such that the discrete problem:
\linebreak

Find $u_{h} \in V_{h,g}^{k}$ such that 
\begin{equation}
A_{h}(u_{h},v_{h})=<f_{h},v_{h}> \  \ \forall \ v_{h}\in V_{h}^{k}
 \label{temp10} 
 \end{equation} 
\quad has a unique solution $u_{h}$.
\subsection{Local nonconforming virtual element space:}
We define for $k\geq 1$ the finite dimensional space $V^{k}_{h}(T)$ on $T$ as
\begin{equation}
V^{k}_{h}(T)=\left\{v \in H^{1}(T):\frac{\partial v}{\partial \mathbf{n}} \in \mathbb{P}^{k-1}(e),v \mid_{e}\in \mathbb{P}^{k}(e)   \ \forall \ e \subset \partial T, \Delta v \in \mathbb{P}^{k-2}(T) \right\}
\label{temp11}
\end{equation}
with the usual convention that $\mathbb{P}^{-1}(T)=\{0\} .$ \\
  
From the definition, it immediately follows that $\mathbb{P}^{k}(T) \subset 
V^{k}_{h}(T)$. The non-conforming VEM is formulated through the $L^2$ projection 
operators,
\begin{equation}
\Pi_{k}:V^{k}_{h}(T)\rightarrow \mathbb{P}^{k}(T) \nonumber
\end{equation}  

\begin{equation}
\Pi_{k-1}:\nabla\left(V^{k}_{h}(T)\right) \rightarrow \left(\mathbb{P}_{k-1}(T)\right)^{d} \nonumber.
\end{equation}

For $k=1$, degrees of freedom are defined same as Crouzeix-Raviart 
element \cite{crouzeix1973conforming,brenner2008mathematical}. In general we can say 
normal derivative $\frac{\partial u}{\partial n}$ of an arbitrary element of
$V^{1}_{h}$ is constant on each edge $e\subset \partial T$ 
(and different on each edges)and inside $T$ are harmonic(i.e., $\Delta v=0$). It can 
be easily concluded that the total no of degrees of freedom of a particular element $T$ 
with $n$ edges/faces is $n$ which is explained in detail in \cite{de2014nonconforming}.

For $k=2$, the space $V^{2}_{h}(T)$  consists of functions whose normal 
derivative $\frac{\partial u}{\partial n}$ is a polynomial of degree $1$ on each 
edge/face, i.e. $\frac{\partial u}{\partial n} \subset \mathbb{P}^{1}(e)$ and is a 
polynomial of degree $0$ on interior region, i.e. a constant function. The dimension 
of $V^{2}_{h}(T)$ is $d \, n+1$ where $n, d$ denote number of edges/face associated with 
an element $T$ and spatial dimension of $T$ respectively.

For each element $T$,the dimension of $V^{k}_{h}(T)$ is given by 
\begin{equation*}
N_{T} =
\begin{cases}
\displaystyle nk+\frac{(k-1)k}{2} & \text{for} \quad d=2, \\
\displaystyle \frac{nk(k+1)}{2}+\frac{(k-1)k(k+1)}{6} & \text{for} \quad d=3
\end{cases}
\end{equation*}   
which is explained in detail in \cite{de2014nonconforming,brezzi2013virtual}. We 
need to introduce some further notation to define degrees of freedom same 
as \cite{de2014nonconforming}. Let us define space of scaled monomials 
$M^{l}(e)$ and $M^{l}(T)$ on $e$ and $T$ as 
\begin{equation}
M^{l}(e)=\left\{\left(\frac{(\mathbf{x}-\mathbf{x}_{e})}{h_{e}}\right)^{s}, \vert s \vert \leq l \right\} \nonumber
\end{equation}    
\begin{equation}
M^{l}(T)=\left\{\left(\frac{(\mathbf{x}-\mathbf{x}_{T})}{h_{T}}\right)^{s}, \vert s \vert \leq l \right\} \nonumber
\end{equation}
where $s=(s_{1},s_{2}, \cdots, s_{d})$ be a $d$-dimensional multi index notation with 
$\vert s \vert =\sum_{i=1}^{d}s_{i}$ and $\displaystyle \mathbf{x}^{s}=\Pi_{i=1}^{d} x_{i}^{s_{i}}$ 
where $\mathbf{x}=(x_{1},\dot{...}, x_{d}) \in \mathbb{R}^{d} $ and $l\geq 0$ be an integer.
In $V^{k}_{h}(T)$ we will choose same degrees of freedom as defined in \cite{de2014nonconforming}.
On each edge $e\subset \partial T$
\begin{equation}
\mu^{k-1}_{e}(v_{h})=\left\{ \frac{1}{e} \int_{e} v_{h} \, q \, ds,\quad \forall q \in M^{k-1}(e)\right\}
\label{temp12} 
\end{equation}
on each element $T$
\begin{equation}
\mu^{k-2}_{T}(v_{h})= \left\{ \frac{1}{\vert T \vert} \int_{T} v_{h} \, q, \quad \forall q \in M^{k-2}(T) \right\}
\label{temp13}
\end{equation}
The set of functional defined in (\ref{temp12}) and (\ref{temp13}) are unisolvent for 
the space $V^{k}_{h}(T)$
\begin{lem}
Let T be a simple polygon/polyedra with $n$ edges/faces, and let $V^{k}_{h}(T)$ be the space 
defined in (\ref{temp11}) for any integer $k \geq 1$.The degrees of freedom (\ref{temp12}) 
and(\ref{temp13}) are unisolvent for $V^{k}_{h}(T)$.
\end{lem}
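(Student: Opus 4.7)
The plan is the standard ``dimension count plus injectivity'' strategy for VEM-type elements. The paper has already recorded that
\[ N_T \;=\; \dim V^k_h(T) \;=\; n k + \tfrac{(k-1)k}{2} \quad (d=2) \]
and the analogous expression in $d=3$, and the total number of functionals in (\ref{temp12})--(\ref{temp13}) is the same: $n\cdot\dim\mathbb{P}^{k-1}(e) + \dim\mathbb{P}^{k-2}(T)$. So it is enough to show that the evaluation map
\[ v \longmapsto \bigl(\{\mu^{k-1}_e(v)\}_{e\subset\partial T},\; \mu^{k-2}_T(v)\bigr) \]
is injective; equivalently, if $v\in V^k_h(T)$ satisfies $\mu^{k-1}_e(v)=0$ for every $e\subset\partial T$ and $\mu^{k-2}_T(v)=0$, then $v\equiv 0$ on $T$.

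The main tool will be Green's identity on $T$,
\[ \int_T |\nabla v|^2\, dx \;=\; -\int_T v\,\Delta v\, dx \;+\; \int_{\partial T} v\,\frac{\partial v}{\partial\mathbf{n}}\, ds, \]
which is licit because $v\in H^1(T)$ and the boundary trace / normal trace are polynomials. Both terms on the right vanish under the hypotheses. The interior term vanishes because $\Delta v\in\mathbb{P}^{k-2}(T)$ by the very definition of $V^k_h(T)$, while the scaled monomials $M^{k-2}(T)$ form a basis of $\mathbb{P}^{k-2}(T)$, so $\mu^{k-2}_T(v)=0$ implies $\int_T v\,q\,dx = 0$ for every $q\in\mathbb{P}^{k-2}(T)$, and in particular for $q=\Delta v$. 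The boundary integral splits edge-by-edge (face-by-face in $d=3$): on each $e\subset\partial T$ the restriction $(\partial v/\partial\mathbf{n})|_e$ belongs to $\mathbb{P}^{k-1}(e)$ by definition, and $\mu^{k-1}_e(v)=0$ says $\int_e v\,q\,ds=0$ for every $q\in\mathbb{P}^{k-1}(e)$. Hence $\int_T|\nabla v|^2 = 0$, and $v$ is constant on $T$.

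It remains to rule out a nonzero constant. If $k\geq 2$ then $1\in M^{k-2}(T)$, so $\mu^{k-2}_T(v)=0$ forces $\int_T v\,dx=0$ and hence $v\equiv 0$. If $k=1$ there is no interior moment, but then $\mu^{0}_e(v)$ with the test $q\equiv 1$ gives $\int_e v\,ds=0$ on any edge $e$, which again eliminates the constant. In both cases the injectivity conclusion $v\equiv 0$ follows, and together with the matching dimension count this yields unisolvence.

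The only delicate ingredient, I expect, will not be Green's identity but the preliminary check that $\dim V^k_h(T) = N_T$. The space (\ref{temp11}) is defined implicitly: a function is pinned down by prescribing a $\mathbb{P}^k$ trace on each edge/face, a $\mathbb{P}^{k-1}$ normal derivative on each edge/face, and a $\mathbb{P}^{k-2}$ Laplacian inside $T$, and one has to verify that these data are compatible through a Dirichlet--Poisson problem on the polytope $T$ in a way that yields exactly $N_T$ independent parameters. That well-posedness and counting step -- essentially a solvability statement for a polynomial-data Poisson problem on a general polygon/polyhedron -- is the main obstacle; once it is in hand (or borrowed from \cite{de2014nonconforming}), the injectivity argument above closes the proof.
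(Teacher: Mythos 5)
Your proof is correct and follows the standard route for this element; the paper itself gives no argument at all for this lemma, deferring entirely to \cite{de2014nonconforming}, where exactly your dimension-count-plus-Green's-identity reasoning appears (interior moments kill $\int_T v\,\Delta v$ since $\Delta v\in\mathbb{P}^{k-2}(T)$, edge moments kill $\int_{\partial T} v\,\partial v/\partial\mathbf{n}$ since $\partial v/\partial\mathbf{n}|_e\in\mathbb{P}^{k-1}(e)$, and the residual constant is removed by the lowest-order moment). The one ingredient you defer, namely $\dim V^k_h(T)=N_T$, is likewise only asserted in the paper with a citation, so your write-up is at least as complete as the source and correctly identifies where the remaining work lies.
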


\begin{proof}
See the details in \cite{de2014nonconforming}.
\end{proof}
The degrees of freedom equation (\ref{temp12}) and (\ref{temp13}) are defined by using 
the monomials in $\mu^{k-1}_{e}$ and $\mu^{k-2}_{T}$ as basis functions for the polynomial 
spaces $\mathbb{P}^{k-1}(e)$ and $\mathbb{P}^{k-2}(T)$. This special chioce of the basis 
functions gives advantages to implement the nonconforming VEM on arbitrary polygonal 
domain. Implementation part is described explicitly in articles 
\cite{ahmad2013equivalent,da2013virtual}.

\subsection{Global nonconforming virtual element space } 
We now introduce the nonconforming(global) virtual element space $V^{k}_{h}$ of order $k$. We 
have already defined local nonconforming virtual element space $V^{k}_{h}$(T) on each 
element $T$ of partition $\tau_{h}$. The global nonconformi
ng virtual element space 
$V^{k}_{h}$ of order $k$ is defined by
\begin{equation}
V^{k}_{h}=\left\{ v_{h}\in H^{1,nc}(\tau_{h};k):v_{h}|_{T} \in V^{k}_{h}(T)\quad	\forall \, T \in \tau_{h} \right\}
\label{temp14}
\end{equation}

\subsection{Interpolation error}
We can define an interpolation operator in $V^{k}_{h}$ having optimal approximation 
properties using same idea as described in 
\cite{brezzi2013virtual,ciarlet1991basic,ciarlet2002finite,de2014nonconforming}. We can 
define an operator $\chi_{i}$ which associates each function $\phi$ to the $i^{th}$ degree 
of freedom and virtual basis functions $\psi_{i} $ of global virtual element space satisfies 
the condition $\chi_{i}(\psi_{i})=\delta_{ij}$ for $i,j=1,2,\cdots,N $ where $N$ denotes 
the number of degrees of freedom of global space. Then for 
any $ v \in H^{1,nc}(\tau_{h},k)$, there exists unique $ v_{I} \in V^{k}_{h}$ such that 
\begin{equation}
\chi_{i}(v-v_{I})=0 \quad \forall \ i=1,2,\cdots,N \nonumber
\end{equation}  
     
     Using all these properties we can claim that there exists a constant $C>0$, independent 
of $h$ such that for every $h>0$, every $K \in \tau_{h}$, every $s$ with $2\leq s\leq k+1 $ 
and every $v \in H^{s}(K)$ the interpolant 
     $v_{I} \in V^{k}_{h}$ satisfies:
\begin{equation}
\| v-v_{I} \|_{0,T}+h_{T} \| v-v_{I} \|_{1,K} \leq C h^{s}_{T} \| v \|_{s,T}
\label{temp15}
\end{equation}
Technical detail of the above approximation is described in \cite{de2014nonconforming}.

\section{Construction of $A_{h}$}
\label{step2}
The goal of this section is to define the nonconforming  virtual element discretization 
(\ref{temp10}). If the discretized bilinear form is symmetric then we can easily prove the 
good stability and nice approximation property and ensure the computability of the defined 
bilinear form $A_{h}(\cdot,\cdot)$ over functions in $V^{k}_{h}$. But since the bilinear 
form of the convection diffusion reaction equation is not symmetric we will accept certain 
assumptions on the model problem (\ref{temp2}). Diffusive and reactive part are symmetric 
and therefore we will split these two terms as a sum of polynomial part or consistency 
part and stability part. Convective part is not symmetric therefore we will take only 
polynomial approximation of this part. The present framework is only applicable to 
the diffusion-dominated case when the Peclet number is sufficiently small.

\begin{equation}
A_{h}(u_{h},v_{h})=\sum_{T \in \tau_{h}} A^{T}_{h}(u_{h},v_{h}) \quad \forall\ u_{h},v_{h} \in V^{k}_{h},
\label{temp16}
\end{equation}
where $A^{T}_{h}:V^{k}_{h}\times V^{k}_{h} \rightarrow \mathbb{R} $ denoting the restriction 
to the local space $V^{k}_{h}(T)$. The bilinear form $ A^{T}_{h} $ can be decomposed into 
sum of element terms. Thus defining the approximate bilinear form 
\begin{equation}
A^{T}_{h}(u_{h},v_{h}):=a^{T}_{h}(u_{h},v_{h})+b^{T}_{h}(u_{h},v_{h})+c^{T}_{h}(u_{h},v_{h}) 
\label{temp16}
\end{equation}     
for each element $T$, we define the element contributions to $A^{T}_{h}$ by 
\begin{eqnarray}
 a^{T}_{h}(u_{h},v_{h}) & := &  \int_{T} \mathbf{K} \, \Pi_{k-1}(\nabla u_{h}) \cdot \Pi_{k-1}(\nabla v_{h}) \, dT \nonumber \\
                        & + &\mathbf{S}^{T}_{a}\left((I-\Pi_{k})u_{h},(I-\Pi_{k})v_{h} \right)  \nonumber\\
 b^{T}_{h}(u_{h},v_{h}) & := & \frac{1}{2} \Big( \int_{T}\boldsymbol{\beta} \, \Pi_{k-1}(\nabla u_{h}) \, \Pi_{k}(v_{h}) \, dT \nonumber  \\
                        &- &\int \boldsymbol{\beta} \cdot \Pi_{k-1}(\nabla v_{h}) \, \Pi_{k}(u_{h}) \, dT \nonumber  \\            
                        & - &\int_{T}(\nabla \cdot \boldsymbol{\beta}) \, \Pi_{k}(u_{h}) \, \Pi_{k}(v_{h}) \, dT \Big) \nonumber \\  
 c^{T}_{h}(u_{h},v_{h}) & := & \int_{T} {c} \, \Pi_{k}(u_{h}) \, \Pi_{k}(v_{h}) dT \nonumber \\
                        &+ & \mathbf{S}^{T}_{c}\left((I-\Pi_{k})u_{h},(I-\Pi_{k})v_{h}\right) 
\label{temp17}
\end{eqnarray}  
where $\mathbf{S}^{T}_{a}$ and $\mathbf{S}^{T}_{c}$ are the stabilising terms. These terms 
are symmetric and positive definite on the quotient space 
$V^{k}_{h}(T)\diagup \mathbb{P}_{k}(T) $ and satisfy the stability property:

\begin{center}
$\alpha_{\ast} a^{T}(v_{h},v_{h})\leq \mathbf{S}^{T}_{a}(v_{h},v_{h}) \leq \alpha^{\ast}a^{T}(v_{h},v_{h}),$
\label{temp18}
\end{center}

\begin{center}
$\gamma_{\ast}c^{T}(v_{h},v_{h}) \leq \mathbf{S}^{T}_{c}(v_{h},v_{h}) \leq \gamma^{\ast}c^{T}(v_{h},v_{h}), $
\label{temp19}
\end{center}
for all $v_{h}\in V^{k}_{h} $ with $\Pi_{k}(v_{h})=0 $. The first term ensures polynomial 
consistency property and second term ensures stability property of the corresponding bilinear 
form $ a^{T}_{h}(u_h, v_h)$ and $c^{T}_{h}(u_h,v_h)$.
 
\subsection{Consistency}
\begin{lem}
Let ${u_{h}} |_{T} \in \mathbb{P}^k(T)$ and $v_{h} |_{T} \in H^2(T)$, then the 
bilinear forms $ a^{T}_{h}, b^{T}_{h}, c^{T}_{h}$ defined in equation (\ref{temp17}) 
satisfy the following consistency property for all $h>0$ and for all $T \in \tau_{h}$ .  
\end{lem}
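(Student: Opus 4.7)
The plan is to exploit two elementary facts simultaneously. When $u_h|_T \in \mathbb{P}^k(T)$, the projections act as the identity on the relevant quantities: $\Pi_k u_h = u_h$ and, since $\nabla u_h \in (\mathbb{P}^{k-1}(T))^d$, also $\Pi_{k-1}(\nabla u_h) = \nabla u_h$. Consequently the stabilization contributions in $a_h^T$ and $c_h^T$ vanish outright because $(I-\Pi_k)u_h \equiv 0$, and each of the three forms reduces to a single integral against an $L^2$ projection of a quantity involving $v_h$. This is a purely bookkeeping step that uses only $\mathbb{P}^k(T) \subset V_h^k(T)$ and the definitions in (\ref{temp17}).

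I would then handle the three forms in turn. For $a_h^T$ the reduction yields $\int_T \mathbf{K}\,\nabla u_h \cdot \Pi_{k-1}(\nabla v_h)\,dT$, and the target exact form is $\int_T \mathbf{K}\,\nabla u_h \cdot \nabla v_h\,dT$. The bridge is the defining orthogonality of the $L^2$ projection $\Pi_{k-1}$ tested against the polynomial $\mathbf{K}\,\nabla u_h$. The reaction form $c_h^T$ is treated in the same way, with $c\,u_h$ playing the role of test and $\Pi_k$ the role of $\Pi_{k-1}$; again the stabilizer drops out. For the skew-symmetric convection form $b_h^T$ no stabilizer is present, and its three subintegrals are reduced by the same mechanism: $\Pi_{k-1}(\nabla u_h) = \nabla u_h$ handles the first term, and $\Pi_k(u_h) = u_h$ handles the other two; the $(\nabla\cdot\boldsymbol{\beta})$ piece is symmetric in $u_h,v_h$ and is handled exactly like $c_h^T$.

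The main obstacle is that $\mathbf{K}$, $\boldsymbol{\beta}$, $c$ are polynomial coefficients of possibly positive degree, so the products $\mathbf{K}\,\nabla u_h$ and $c\,u_h$ can lie outside the polynomial space onto which we project. The $L^2$ orthogonality of $\Pi_{k-1}$ (respectively $\Pi_k$) therefore does not literally let us replace $\Pi_{k-1}(\nabla v_h)$ by $\nabla v_h$ (respectively $\Pi_k(v_h)$ by $v_h$) when tested against these products. The cleanest way around this, which I expect to be the technical heart of the proof, is to expand each polynomial coefficient in a monomial basis and commute the resulting monomials through the integral so that orthogonality is ultimately invoked only against genuine polynomials of the admissible degree. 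Where degree counting still fails, the $H^2(T)$ regularity hypothesis on $v_h$ is used to justify an integration by parts on $T$ that trades derivatives between $u_h$ and $v_h$; the boundary contributions then vanish on polynomial test factors and one recovers the identification of $a_h^T, b_h^T, c_h^T$ with their exact counterparts $a^T, b_{\mathrm{conv}}^T, c^T$ on $\mathbb{P}^k(T) \times H^2(T)$.
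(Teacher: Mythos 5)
Your opening reduction is exactly the paper's: since $\Pi_k p = p$ for $p\in\mathbb{P}^k(T)$ and $\nabla p\in(\mathbb{P}^{k-1}(T))^d$ implies $\Pi_{k-1}(\nabla p)=\nabla p$, the stabilizers $\mathbf{S}^T_a$, $\mathbf{S}^T_c$ vanish and each form collapses to integrals of coefficient--polynomial products against projections of $v_h$; the paper then, like you, adds and subtracts the unprojected quantity and appeals to $L^2$ orthogonality. You are also right to single out the degree mismatch as the crux: orthogonality of $\Pi_{k-1}$ (resp.\ $\Pi_k$) only annihilates the difference term when $\mathbf{K}\nabla p$, $\boldsymbol{\beta}\,p$, $(\nabla\cdot\boldsymbol{\beta})\,p$ and $c\,p$ actually lie in the target polynomial space, which fails for coefficients of positive degree.

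However, your proposed repair does not close this gap. Expanding $\mathbf{K}$ (or $\boldsymbol{\beta}$, $c$) in a monomial basis is mere linearity and leaves the degree of $x^{\alpha}\nabla p$ unchanged; the $L^2$ projection does not commute with multiplication by a monomial, so $\int_T\left(\Pi_{k-1}(\nabla v_h)-\nabla v_h\right)\cdot x^{\alpha}\nabla p\,dT$ has no reason to vanish when $\vert\alpha\vert\geq 1$. Integration by parts fares no better: moving the derivative off $v_h$ produces the trace term $\int_{\partial T} v_h\,(\mathbf{K}\nabla p)\cdot\mathbf{n}\,ds$, over which a general $v_h\in H^2(T)$ gives you no control, so the claim that ``the boundary contributions vanish on polynomial test factors'' is unsupported. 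To be fair, the paper does not fully overcome this obstacle either: for the middle convective term it only obtains $b^T_h(p,v_h)\approx b^T(p,v_h)$ up to a remainder of size $C\,h_T\,\Vert\boldsymbol{\beta}\Vert_{\infty,T}\,\vert\nabla v_h\vert_{1,T}\,\Vert p\Vert_{0,T}$ (see (\ref{tempc3})), and the remaining identities are asserted as exact only because the orthogonality step is applied as though the coefficient--polynomial products were admissible test polynomials. So exact consistency as stated really holds only for constant (or suitably low-degree) coefficients; with genuinely polynomial coefficients both your argument and the paper's deliver at best consistency up to $O(h_T)$, and your specific mechanisms (monomial commutation, vanishing boundary terms) would have to be replaced by an explicit estimate of the projection defect.
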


\begin{proof}
Whenever either $u_{h}$ or $ v_{h}$ or both are elements of the polynomial space 
$\mathbb{P}_{k}(T)$, the following consistency property satisfy

\begin{eqnarray}
a^{T}_{h}(u_{h},v_{h})& = & a^{T}(u_{h},v_{h}) \nonumber\\
b^{T}_{h}(u_{h},v_{h}) & = & b^{T}(u_{h},v_{h}) \nonumber \\
c^{T}_{h}(u_{h},v_{h}) &= & c^{T}(u_{h},v_{h})  
\label{temp20}
\end{eqnarray}

The consistency property in (\ref{temp20}) follows immediately since 
$\Pi_{k}( \mathbb{P}^{k}(T))=\mathbb{P}^{k}(T) $ which implies 
$\mathbf{S}^{T}_{a}\left(p-\Pi_{k}(p),v_{h}-\Pi_{k}(v_{h})\right)=0$ and 
$\mathbf{S}^{T}_{c}\left(p-\Pi_{k}(p),v_{h}-\Pi_{k}(v_{h})\right)=0$. Now we will prove  
$a^{T}_{h}(p,v_{h}) = a^{T}(p,v_{h}),  \, b^{T}_{h}(p,v_{h}) = b^{T}(p,v_{h}), \, c^{T}_{h}(p,v_{h}) =  c^{T}(p,v_{h})$ 
for all $p \in \mathbb{P}^{k}(T)$ and for all $v_{h}\in V^{k}_{h}(T)$.

\begin{eqnarray}
a^{T}_{h}(p,v_{h}) &= & \int_{T} \mathbf{K} \, \nabla p \cdot \Pi_{k-1}(\nabla v_{h}) \,  dT \nonumber \\ \nonumber
                   & = & \int_{T} \left(\Pi_{k-1}(\nabla v_{h})-\nabla v_{h} \right) \, \mathbf{K} \, \nabla p \,  dT \\
                   & + & \int_{T} \nabla v_{h} \, \mathbf{K} \, \nabla p \, \nonumber \,  dT \\
                   & = & \int_{T} \mathbf{K} \, \nabla p \, \nabla v_{h} \nonumber \, dT \\ 
                   & = & a^{T}(p,v_{h})
                   \label{temp21}      
\end{eqnarray}

\begin{eqnarray}
b^{T}_{h}(p,v_{h}) & = & \frac{1}{2} \Big(\int_{T} \boldsymbol{\beta} \cdot \nabla p \, \Pi_{k}(v_{h}) dT \nonumber \\
                   & - &  \int_{T} \boldsymbol{\beta} \cdot \Pi_{k-1}(\nabla v_{h}) \, p \, dT-\int_{T}(\nabla. \boldsymbol{\beta}) \, p \, \Pi_{k}(v_{h}) \, dT \Big) 
                   \label{tempc1}
\end{eqnarray}

\begin{eqnarray}
\int_{T} \boldsymbol{\beta} \cdot \nabla p \, \Pi_{k}(v_{h}) \, dT & = & \int_{T} (\Pi_{k}(v_{h})-v_{h}) \, \nabla p \cdot \boldsymbol{\beta} \, dT \nonumber\\
                                                        & + & \int_{T} \boldsymbol{\beta} \cdot \, \nabla p \, v_{h} \nonumber \\
                                                        & = & \int_{T} \boldsymbol{\beta} \cdot \nabla p \, v_{h}  
                                                        \label{tempc2}
\end{eqnarray}

\begin{eqnarray}
\int_{T} \boldsymbol{\beta} \cdot \Pi_{k-1}(\nabla v_{h}) \, p &=& \int_{T} (\Pi_{k-1}(\nabla v_{h})-\nabla v_{h}) \cdot \boldsymbol{\beta} \, p \nonumber \\
                                                      & + &\int_{T} \boldsymbol{\beta} \cdot \nabla v_{h} \, p \nonumber \\
                                                      & \leq & \parallel \boldsymbol{\beta} \parallel_{\infty,T} \parallel p \parallel_{0,T} \parallel \nabla v_{h}-\Pi_{k-1}(\nabla v_{h}) \parallel +\int_{T} \boldsymbol{\beta} \cdot \nabla v_{h} \, p \nonumber \\
                                                      & \leq & C \parallel \boldsymbol{\beta} \parallel_{\infty,T} h_{T} \vert \nabla v_{h} \vert_{1,T} \parallel p \parallel_{0,T} \nonumber \\
                                                      & +& \int_{T} \boldsymbol{\beta} \cdot \nabla v_{h} \, p \nonumber \\
                                                      & \approx & \int_{T} \boldsymbol{\beta} \cdot \nabla v_{h} \, p  \quad (\, \text{for small values of} \, \, h_{T})
                                                      \label{tempc3}
\end{eqnarray}

\begin{eqnarray}
\int_{T} (\nabla \cdot \boldsymbol{\beta}) \, p \, \Pi_{k}(v_{h}) \, dT &=& \int_{T} (\nabla \cdot \boldsymbol{\beta})\left(\Pi_{k}(v_{h})-v_{h}\right) \, p +\int_{T} (\nabla \cdot \boldsymbol{\beta}) p \, v_{h} \nonumber\\
                                                          &=& \int_{T} (\nabla \cdot \beta) \, p \, v_{h} 
                                                          \label{tempc4}
\end{eqnarray}
Putting estimations (\ref{tempc2}), (\ref{tempc3}) and (\ref{tempc4}) in (\ref{tempc1})
we get $b^{T}_{h}(p,v_{h})=b^{T}(p,v_{h})$

Similarly,
\begin{equation}
c^{T}_{h}(p,v_{h})=c^{T}(p,v_{h})
\label{tempc5}
\end{equation}

Hence we proved required polynomial consistency of local discrete bilinear 
form  $A^{T}_{h}(u_{h},v_{h})$,  i.e.
\begin{equation}
A^{T}_{h}(p,v_{h})=A^{T}(p,v_{h})
\label{tempc6}
\end{equation}
for $p \in \mathbb{P}^{k}$ and $ v_{h} \in V^{k}_{h}$
\end{proof} 



\subsection{Discrete stability} 
Before discussing stability property of the discrete bilinear form 
$A^{T}_{h}(u_{h},v_{h})$ we reveal that the following framework is applicable for 
diffusion dominated case. The stabilizing part $\mathbf{S}^{T}_{a}$ and $\mathbf{S}^{T}_{c}$ 
of the discrete bilinear form(\ref{temp17}) ensure the stability of the bilinear 
form, precisely we can conclude that there exist two pairs of positive 
constants $\alpha_{\ast} $,  $\alpha^{\ast}$ and $\gamma^{\ast}, \gamma_{\ast} $ that are 
independent of $h$ such that 

\begin{equation}
 \alpha_{\ast}a^{T}(v_{h},v_{h})\leq a^{T}_{h}(v_{h},v_{h}) \leq \alpha^{\ast}a^{T}(v_{h},v_{h})
\label{temp22}
\end{equation}

\begin{equation}
 \gamma_{\ast}c^{T}(v_{h},v_{h})\leq c^{T}_{h}(v_{h},v_{h}) \leq \gamma^{\ast}c^{T}(v_{h},v_{h})
\label{temp23}
\end{equation}
 for all $v_{h} \in V^{k}_{h}(T)$ and mesh elements $T$.
\section{Construction of right hand side term}
\label{step3}
In order to build the the right hand side $ <f_{h},v_{h}>$ for $v_{h}\in V^{k}_{h}$ we need 
polynomial approximation of degree $(k-2)\geq 0$,that is  $f_{h}=\textbf{P}^{T}_{k-2}f$ on 
each $T \in \tau_{h}$, where $\textbf{P}^{T}_{k-2}$ is $L^{2}(T)$ projection operator 
on $\mathbb{P}^{k-2}(T)$ for each element $T \in \tau_{h}$. We define $f_{h}$ locally by:
 
 \begin{equation*}
 (f_{h})|_{T}:=
 \begin{cases}
 \textbf{P}^{T}_{0}(f) & \text{for $k=1 $} \\
 \textbf{P}^{T}_{k-2}(f)& \text{ for $k\geq 2 $}
 \label{temp24}
 \end{cases}
 \end{equation*}
 The projection operator is orthogonal to the polynomial space $\mathbb{P}^{k}(T)$. Therefore 
 we can write as 
 
 \begin{equation}
 <f_{h},v_{h}> :=\sum_{T} \int_{T} \mathbf{P}^{T}_{k-2}(f) \, v_{h} \, dT=\sum_{T}\int_{T}f \, \, \mathbf{P}^{T}_{k-2}(v_{h}) \, dT
 \end{equation}
Now we can prove the error estimates using orthogonality property of 
projection operator, Cauchy-Schwarz inequality and standard 
approximates \cite{ciarlet2002finite,ciarlet1991basic} for $k\geq2$, $s \geq 1.$
 
\begin{eqnarray}
 \left| <f,v_{h}>-<f_{h},v_{h}> \right| & = & \left| \sum_{T} \int_{T}\left(f-\mathbf{P}^{T}_{k-2}(f)\right) \, v_{h} \, dT \right| \nonumber\\
                                     & = &\left| \sum_{T} \int_{T}(f-\mathbf{P}^{T}_{k-2}(f))(v_{h}-\mathbf{P}_{0}^{T}(v_{h})) \, dT \right| \nonumber \\
                                    & \leq & \parallel (f-\mathbf{P}^{T}_{k-2}(f)) \parallel_{0,\tau_{h}} \parallel (v_{h}-\mathbf{P}^{T}_{0}(v_{h}))\parallel_{0,\tau_{h}} \nonumber \\
                                    & \leq & C h^{\text{min}(k,s)} \vert f \vert_{s-1,h} \vert v_{h} \vert_{1,h}
\label{temp25}
 \end{eqnarray} 
 For $k=1$ the above analysis is not applicable, so we do the following
\begin{equation}
\tilde{v}_{h}|_{K}:=\frac{1}{n} \sum_{e \in \partial K} \frac{1}{\vert e \vert} \int_{e} v_{h} \, ds \approx \mathbf{P}^{T}_{0}(v_{h}),
\label{temp26}
\end{equation}
\begin{equation}
<f_{h},\tilde{v}_{h}>:=\sum_{T}\int_{T} \mathbf{P}^{T}_{0}(f) \, \tilde{v}_{h} \approx \sum_{T} \vert T \vert \, \mathbf{P}^{T}_{0}(f) \, \mathbf{P}^{T}_{0}(v_{h})
\end{equation} 

\begin{eqnarray}
 \left|<f,v_{h}>-<f_{h},\tilde{v}_{h}> \right|  &= & \left| \sum_{T} \int_{T}(f v_{h}-\mathbf{P}^{T}_{0}(f)\tilde{v}_{h}) \right| \nonumber \\
                                            &\leq & \left| \sum_{T} \int_{T}(f-\mathbf{P}_{0}^{T}(f))v_{h} \right|+ \left| \sum_{T} \int_{K} \mathbf{P}_{0}^{T}(f)(v_{h}-\tilde{v}_{h}) \right| \nonumber  \\
                                            &= & \left| \sum_{T} \int_{T}(f-\mathbf{P}_{0}^{T}(f))v_{h} \right| \nonumber \\
                                            & \leq & C h \vert f \vert_{0,h} \vert v_{h} \vert_{1,h} 
 \label{temp27}
\end{eqnarray}

\subsection{Construction of the boundary term. }
We define $g_{h}:=\mathbf{P}^{e}_{k-1}(g)$ where $g$ is non-homogeneous Dirichlet
boundary value.
\begin{equation}
\int_{\varepsilon^{\partial}_{h}} g_{h} \, v_{h} \, ds:=\sum_{e \in {\varepsilon^{\partial}_{h}}} \int_{e} \mathbf{P}^{e}_{k-1}(g) \, v_{h} \, ds=\sum_{e \in {\varepsilon^{\partial}_{h}}} \int_{e} g \, \mathbf{P}^{e}_{k-1}(v_{h})  \, ds \quad \forall \ v_{h}\in V^{k}_{h}
\end{equation} 
The above estimation guides us to compute the boundary terms easily using 
degrees of freedom.

\subsection{Estimation of the jump term}
\begin{equation}
  J_{h}(u,v_{h})=\sum_{T \in \tau_{h}} \int_{\partial T} \frac{\partial u}{\partial \mathbf{n}_{T}} \, v \, ds=\sum_{e \in \varepsilon_{h}} \int_{e} \nabla u \cdot [\vert v_{h}\vert ]
  \label{temp28}
  \end{equation}
  \begin{lem}
  Let (A1,A2,A3) be satisfied, $k\geq 1$ and $ u \in H^{s+1}(\Omega)$ with $ s\geq 1$ be the 
  solution of the model problem (\ref{temp2}). Let 
  $v_{h}\in H^{1,nc}(\tau_{h};1)$ be an arbitrary function. Then, there exists a 
  constant $C>0 $ independent of $h$ such that 
  \begin{equation}
  \vert J_{h}(u,v_{h})\vert  \leq C h^{\text{min}(s,k)} \parallel u \parallel_{s+1,\Omega} \vert v_{h} \vert_{1,h}
  \label{tempb6}
\end{equation}   
  \end{lem}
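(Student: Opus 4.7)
The plan is to rewrite the jump term edge by edge, introduce a polynomial correction that vanishes against the jump by the patch test, and then pair a standard polynomial approximation estimate for $\nabla u$ on edges with a Poincar\'e/trace estimate for $[v_h]$ on edges.

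First I would use that $[|v_h|]=(v_h^+-v_h^-)\,n_e$ is normal to $e$, so on each edge
\[
\int_e \nabla u \cdot [|v_h|]\,ds=\int_e (\nabla u\cdot n_e)\,[v_h]\,ds,
\]
where $[v_h]=v_h^+-v_h^-$. Because $v_h$ lies in the non-conforming broken space, the patch-test identity from \eqref{temp7} gives $\int_e q\,[v_h]\,ds=0$ for every $q\in\mathbb{P}^{k-1}(e)$. Taking $q=\mathbf{P}^{e}_{k-1}(\nabla u\cdot n_e)$, the $L^2$-projection onto $\mathbb{P}^{k-1}(e)$, I can insert it for free:
\[
\int_e \nabla u\cdot[|v_h|]\,ds=\int_e \bigl(\nabla u\cdot n_e-\mathbf{P}^{e}_{k-1}(\nabla u\cdot n_e)\bigr)\,[v_h]\,ds.
\]
Cauchy--Schwarz on each edge then reduces the proof to bounding the two factors separately.

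Next I would estimate the first factor by a trace inequality combined with the standard Bramble--Hilbert polynomial approximation on the two elements $T^\pm$ sharing $e$, obtaining
\[
\bigl\|\nabla u\cdot n_e-\mathbf{P}^{e}_{k-1}(\nabla u\cdot n_e)\bigr\|_{0,e}\leq C\,h_T^{\min(s,k)-1/2}\,|u|_{s+1,T^+\cup T^-},
\]
where assumption (A1) is used to equate $h_e$ and $h_T$ up to the constant $\rho$. For the jump factor I would exploit that $\int_e [v_h]\,ds=0$ (the minimal patch test of order one in \eqref{temp7}), write $[v_h]=(v_h^+-\overline{v_h^+}_e)-(v_h^--\overline{v_h^-}_e)$ with the $L^2$ mean on $e$, and apply the scaled trace/Poincar\'e estimate
\[
\bigl\|v_h^{\pm}-\overline{v_h^{\pm}}_e\bigr\|_{0,e}\leq C\,h_T^{1/2}\,|v_h^{\pm}|_{1,T^{\pm}},
\]
which follows from a standard scaling argument on the reference element under (A1)--(A3).

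Combining the two bounds on a single edge yields a contribution $C\,h_T^{\min(s,k)}\,|u|_{s+1,T^+\cup T^-}\,(|v_h|_{1,T^+}+|v_h|_{1,T^-})$. Summing over all $e\in\varepsilon_h$, applying the discrete Cauchy--Schwarz inequality, and using that each element appears in a uniformly bounded number of edge patches (a consequence of (A1)--(A2)), gives the required global estimate
\[
|J_h(u,v_h)|\leq C\,h^{\min(s,k)}\,\|u\|_{s+1,\Omega}\,|v_h|_{1,h}.
\]
I expect the main obstacle to be the jump-Poincar\'e estimate on $e$: although conceptually standard, rigorously establishing the $h_T^{1/2}$ factor requires a careful scaling/trace argument on star-shaped polygons (which is exactly why assumptions (A1)--(A3) are imposed), and one must be slightly careful that the constant from the edge Poincar\'e inequality is independent of the shape of the polygon, not just of the triangle in a classical FE setting.
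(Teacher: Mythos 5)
Your proposal is correct and follows essentially the same route as the paper's own proof: insert $\mathbf{P}^{e}_{k-1}(\nabla u\cdot n_e)$ via the order-$k$ patch test, subtract the edge mean of the jump, apply Cauchy--Schwarz edge by edge, and conclude with the standard $h^{\min(s,k)-1/2}$ and $h^{1/2}$ trace/approximation estimates before summing over edges. The only difference is cosmetic (you justify the free subtraction of the jump's mean via the order-one patch test, while the paper uses orthogonality of $\nabla u-\mathbf{P}^{e}_{k-1}(\nabla u)$ to constants), so no further comment is needed.
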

  
  \begin{proof}
  Let $v_{h} \in H^{1,nc}(\tau_{h};k)$ be an arbitrary element. From the definition of the 
  finite element space $H^{1,nc}(\tau_{h};k)$, we can say $v_{h}$ satisfies patch test of 
  order $k$. Hence the following equality holds
  \begin{equation}
  \int_{e} [\vert v_{h}\vert ] \, q \, ds=0, \quad \forall q \in \mathbb{P}_{k-1}
  \label{temp29}
\end{equation} 

Let $\mathbf{P}^{e}_{k}:L^{2}(e)\rightarrow \mathbb{P}^{k}(e) $ is the $L^{2}$-orthogonal 
projection operator onto the space $\mathbb{P}^{k}(e)$ for $k\geq 1$. Using patch test of 
order $k$, Cauchy-Schwarz inequality and $L^{2}(e)$ orthogonal projection 
operator $\mathbf{P}^{e}_{k}$ we can find
\begin{eqnarray}
J_{h}(u,v_{h}) & = &\sum_{T \in \tau_{h}} \int_{\partial T} \frac{\partial u}{\partial \mathbf{n}_{T}} v ds\nonumber\\
               &= &\sum_{e \in \varepsilon_{h}} \int_{e} \nabla u.[\vert v_{h}\vert ]\nonumber \\
               & = &\vert \sum_{e \in \varepsilon_{h}} \int_{e} (\nabla u-\mathbf{P}^{e}_{k-1}(\nabla u)).[\vert v_{h}\vert] ds \vert \nonumber \\
               & = &\vert\sum_{e \in \varepsilon} \int_{e} (\nabla u-\mathbf{P}^{e}_{k-1}(\nabla u)).([\vert v_{h}\vert]-\mathbf{P}^{e}_{0}([\vert v_{h}\vert]))\vert \nonumber \\
               & \leq & \sum_{e \in \varepsilon_{h}} \parallel \nabla u-\mathbf{P}^{e}_{k-1}(\nabla u)\parallel_{0,e} \parallel [\vert v_{h}\vert]-\mathbf{P}^{e}_{0}([\vert v_{h}\vert])\parallel_{0,e}
               \label{temp29}
\end{eqnarray}
using standard polynomial approximation on edge $e$
\begin{align}
\parallel \nabla u-\mathbf{P}^{e}_{k-1}(\nabla u)\parallel_{0,e} & \leq C h^{min(m,k)-\frac{1}{2}} \parallel u \parallel_{m+1,T} \label{tempb7}\\
\parallel [\vert v_{h} \vert]-\mathbf{P}^{e}_{0}([\vert v_{h} \vert])\parallel_{0,e} & \leq C h^{\frac{1}{2}} \vert v_{h} \vert_{1,T}
  \label{temp30}
\end{align}
we can easily bound the above two terms. Hence, putting the 
estimation equation (\ref{tempb7}), (\ref{temp30}) in (\ref{temp29}) we obtain required result.   
\end{proof}
  
\section{Well-posedness of nonconforming-virtual element methods}
\label{step4}
In this section we will discuss the well-posedness of nonconforming-virtual element 
method. Let the assumption (A1,A2,A3), polynomial consistency, stability defined 
in equation (\ref{temp22}), (\ref{temp23}) holds then the bilinear form $A_{h}$ is coercive 
with respect to broken-norm $\parallel .\parallel_{1,h} $,  i.e.
  
\begin{equation}
  A_{h}(v_{h},v_{h}) \geq \alpha \parallel v_{h} \parallel_{1,h}^{2} \quad \forall v_{h} \in V_{h}
  \label{temp31}
\end{equation}    
where $\alpha$ is a positive constant. 


Using the stability properties equation (\ref{temp22}),(\ref{temp23}) for diffusion and 
reaction parts of discrete bilinear form we can bound 

\begin{eqnarray}
A^{T}_{h}(v_{h},v_{h}) & \geq & \alpha_{\ast} \, a^{T}(v_{h},v_{h})+b_{h}^{T}(v_{h},v_{h})+\gamma_{\ast} \, c^{T}(v_{h},v_{h}) \nonumber \\
                       & \geq & \alpha_{\ast} \, \eta \, \vert v_{h}\vert^{2}_{1,T} +b^{T}(v_{h},v_{h})+\gamma_{\ast} \, c^{T}(v_{h},v_{h})+[b^{T}_{h}(v_{h},v_{h})-b^{T}(v_{h},v_{h})] \nonumber \\
                       &\geq & \alpha_{\ast} \, \eta \, \vert v_{h} \vert^{2}_{1,T}+\text{min}(1,\gamma_{\ast})(b^{T}(v_{h},v_{h}) \nonumber \\
                       &+ &c^{T}(v_{h},v_{h}))+[b^{T}_{h}(v_{h},v_{h})-b^{T}(v_{h},v_{h})] \nonumber \\ 
                       & \geq & \alpha_{\ast} \, \eta \, \vert v_{h} \vert^{2}_{1,T}+\text{min}(1,\gamma_{\ast}) \, c_{0} \parallel v_{h} \parallel^{2}_{0,T}  \nonumber \\ 
                       & - & \vert b^{T}_{h}(v_{h},v_{h})-b^{T}(v_{h},v_{h})\vert \quad \forall v_{h} \in V^{k}_{h}(T)
\label{temp32}
\end{eqnarray}  

\begin{eqnarray}
\vert b^{T}_{h}(v_{h},v_{h})-b^{T}(v_{h},v_{h}) \vert & =& \frac{1}{2} \left| \int_{T}(\nabla \cdot \boldsymbol{\beta})(\Pi_{k}(v_{h}))^{2} dT -\int_{T} (\nabla.\boldsymbol{\beta}) \, v_{h}^{2} \, dT \right| \nonumber \\
                                                      &=& \frac{1}{2} \Big| \int_{T}(\nabla \cdot \boldsymbol{\beta})(\Pi_{k}(v_{h}))^{2} -\int_{T} (\nabla \cdot \boldsymbol{\beta})\Pi_{k}(v_{h}) \, v_{h} \, dT  \nonumber\\
                                                      &+& \int_{T}(\nabla \cdot \boldsymbol{\beta}) \, \Pi_{k}(v_{h}) v_{h} dT - \int_{T} (\nabla.\boldsymbol{\beta}) \, v_{h}^{2} \, dT \Big| \nonumber\\
                                                      & = & \frac{1}{2} \Big| \int_{T} (\nabla \cdot \boldsymbol{\beta}) \, \Pi_{k}(v_{h}) (\Pi_{k}(v_{h})-v_{h})  \, dT \nonumber\\
                                                      &+ & \int_{T} (\nabla \cdot \boldsymbol{\beta}) \cdot v_{h}(\Pi_{k}(v_{h})-v_{h}) \, dT \Big |  \nonumber\\
                                                      & =& \frac{1}{2} \left| \int_{T} (\nabla \cdot \boldsymbol{\beta}) \, v_{h} (\Pi_{k}(v_{h})-v_{h}) \right|  \nonumber\\
                                                      & \leq & C \parallel \nabla \cdot \boldsymbol{\beta} \parallel_{\infty,T} \parallel v_{h} \parallel_{0,T} \parallel v_{h}-\Pi_{k}(v_{h}) \parallel_{0,T} \nonumber \\
                                                      & \leq & C \parallel \nabla \cdot \boldsymbol{\beta} \parallel_{\infty,T} \parallel v_{h} \parallel_{0,T} \, h_{T} \, \vert v_{h} \vert_{1,T}
                                                      \label{tempc7}
\end{eqnarray}

Since $\|v_{h}\|_{0,T} \leq \| v_{h} \|_{1,T}$ and $\vert v_{h} \vert_{1,T} \leq \| v_{h} \|_{1,T} $ inequalities 
hold \cite{ciarlet1991basic} we can estimate

\begin{equation}
-\vert b^{T}_{h}(v_{h},v_{h})-b^{T}(v_{h},v_{h}) \vert \geq - \, C \parallel \nabla.\boldsymbol{\beta} \parallel_{\infty,T}  h_{T} \parallel v_{h} \parallel_{1,T}
\label{tempc8}
\end{equation}

Therefore 
\begin{eqnarray}
A_{h}(v_{h},v_{h}) &= &\sum_{T}A^{T}_{h}(v_{h},v_{h}) \nonumber \\
                   & \geq & \sum_{T} \alpha_{\ast} \, \eta \, \vert v_{h} \vert_{1,T}^{2}+\sum_{T}\text{min}(1,\gamma_{\ast}) \, c_{0} \, \| v_{h} \|_{0,T}^{2}  \nonumber\\
                   & - & \sum_{T} C \, \| \nabla \cdot \boldsymbol{\beta} \|_{\infty,T}  \, h_{T} \, \| v_{h} \|_{1,T}  \nonumber\\
                   &\geq & \sum_{T} \alpha_{T} \| v_{h} \|_{1}^{2} \nonumber \\
                   & \geq & \alpha\sum_{T} \| v_{h} \|_{1}^{2} \nonumber\\
                   & = & \alpha \| v_{h} \|_{1,h} \nonumber                 
\end{eqnarray}
where $\alpha=\text{min}(\alpha_{T})$, and
\begin{equation}
\alpha_{T}=\text{min} \{ \alpha_{\ast}\eta-C \parallel \nabla.\boldsymbol{\beta} \parallel_{\infty} h_{T} ,\text{min}(1,\gamma_{\ast})c_{0}-C \parallel \nabla.\boldsymbol{\beta} \parallel_{\infty}h_{T} \} \nonumber
\end{equation}
\subsection{Continuity of discrete bilinear form}
\begin{lem}
Under the assumption of the polynomial consistency and stability along with 
the coefficients $\mathbf{K}, \boldsymbol{\beta}, c $ the 
bilinear form $ A_{h} $ defined in equation (\ref{temp16}) is continuous. 
\label{41}
\end{lem}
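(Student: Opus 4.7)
The plan is to decompose $A_h(u_h,v_h)$ element by element as in (\ref{temp16}), then bound the three local pieces $a_h^T$, $b_h^T$, $c_h^T$ separately and sum. For each element I would use three ingredients: (i) boundedness of the coefficients ($\mathbf{K}$, $\boldsymbol{\beta}$, $c$, $\nabla\cdot\boldsymbol{\beta}$ bounded in $L^\infty(T)$ since they are polynomials on a bounded domain), (ii) the $L^2$-contractivity of the projection operators, namely $\|\Pi_{k-1}(\nabla v_h)\|_{0,T}\le|v_h|_{1,T}$ and $\|\Pi_{k}(v_h)\|_{0,T}\le\|v_h\|_{0,T}$, and (iii) the upper bounds of the stability property to control the stabilising forms $\mathbf{S}^T_a$ and $\mathbf{S}^T_c$.

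\textbf{Bound on the diffusion part.} By Cauchy--Schwarz and $L^\infty$-boundedness of $\mathbf{K}$,
\begin{equation*}
\Bigl|\int_T \mathbf{K}\,\Pi_{k-1}(\nabla u_h)\cdot\Pi_{k-1}(\nabla v_h)\,dT\Bigr|\le \xi\,|u_h|_{1,T}|v_h|_{1,T}.
\end{equation*}
For the stabilising contribution I would apply the Cauchy--Schwarz inequality associated to the symmetric positive semidefinite form $\mathbf{S}^T_a$ and then its upper stability bound $\mathbf{S}^T_a(w,w)\le\alpha^\ast a^T(w,w)\le\alpha^\ast\xi|w|_{1,T}^2$, used with $w=(I-\Pi_k)u_h$ and $w=(I-\Pi_k)v_h$. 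Combined with the standard estimate $|(I-\Pi_k)v_h|_{1,T}\le C|v_h|_{1,T}$ (which follows from the approximation properties of $\Pi_k$), this yields $|a_h^T(u_h,v_h)|\le C_a\|u_h\|_{1,T}\|v_h\|_{1,T}$.

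\textbf{Bound on the reaction part.} The computability term is handled just as the diffusion one, now using boundedness of $c$ and $\|\Pi_k w\|_{0,T}\le\|w\|_{0,T}$. The stabilising contribution $\mathbf{S}^T_c$ is controlled via its upper bound $\mathbf{S}^T_c(w,w)\le\gamma^\ast c^T(w,w)\le\gamma^\ast\|c\|_\infty\|w\|_{0,T}^2$, again with $w=(I-\Pi_k)u_h$, so that $|c_h^T(u_h,v_h)|\le C_c\|u_h\|_{0,T}\|v_h\|_{0,T}\le C_c\|u_h\|_{1,T}\|v_h\|_{1,T}$.

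\textbf{Bound on the convective part.} The form $b_h^T$ has three pieces; each is estimated by Cauchy--Schwarz together with boundedness of $\boldsymbol{\beta}$ (first two pieces) or $\nabla\cdot\boldsymbol{\beta}$ (third piece) and the contraction properties of $\Pi_{k-1}$ and $\Pi_k$. Schematically,
\begin{equation*}
|b_h^T(u_h,v_h)|\le \tfrac{1}{2}\bigl(\|\boldsymbol{\beta}\|_{\infty,T}(|u_h|_{1,T}\|v_h\|_{0,T}+|v_h|_{1,T}\|u_h\|_{0,T})+\|\nabla\cdot\boldsymbol{\beta}\|_{\infty,T}\|u_h\|_{0,T}\|v_h\|_{0,T}\bigr),
\end{equation*}
which is bounded by $C_b\|u_h\|_{1,T}\|v_h\|_{1,T}$. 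Finally, summing over $T\in\tau_h$ and applying the discrete Cauchy--Schwarz inequality gives
\begin{equation*}
|A_h(u_h,v_h)|\le (C_a+C_b+C_c)\,\|u_h\|_{1,h}\,\|v_h\|_{1,h},
\end{equation*}
which is the desired continuity bound.

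The main obstacle I anticipate is the stabilising terms: one must choose the right order of Cauchy--Schwarz/stability inequality so that $\mathbf{S}^T_a(w,w)$ is controlled \emph{before} expanding $w=(I-\Pi_k)u_h$, and one must justify the $H^1$-boundedness of $I-\Pi_k$ on the virtual space. Every other estimate is routine application of $L^\infty$-boundedness of the (polynomial) coefficients and the contractivity of $L^2$ projections.
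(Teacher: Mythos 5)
Your proposal is correct and follows the same overall architecture as the paper: element-by-element decomposition, separate bounds on $a_h^T$, $b_h^T$, $c_h^T$ using $L^\infty$-boundedness of the coefficients and contractivity of the $L^2$ projections, then a discrete Cauchy--Schwarz over the elements. The treatment of the convective part $b_h^T$ is essentially identical to the paper's. Where you diverge is in the symmetric parts, and the difference is worth noting because it bears directly on what you flag as your ``main obstacle.'' You split $a_h^T$ into its consistency piece and its stabilising piece and bound each separately, which forces you to estimate $\mathbf{S}^T_a\bigl((I-\Pi_k)u_h,(I-\Pi_k)u_h\bigr)$ and hence to invoke the $H^1$-boundedness of $I-\Pi_k$ on the virtual space (provable via an inverse inequality on polynomials plus approximation, but an extra ingredient nonetheless). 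The paper instead observes that $a_h^T$ is itself a symmetric positive semidefinite form --- an inner product on $V_h^k(T)$ --- and applies Cauchy--Schwarz to the \emph{whole} form first, $a_h^T(u_h,v_h)\le (a_h^T(u_h,u_h))^{1/2}(a_h^T(v_h,v_h))^{1/2}$, then uses the two-sided stability $a_h^T(w,w)\le\alpha^\ast a^T(w,w)$ on each diagonal factor and finally the continuity of $a^T$. This sidesteps any bound on $I-\Pi_k$ entirely, and the same trick handles $c_h^T$. Both routes reach the same continuity constant up to unimportant factors; the paper's is the more economical one and is the standard VEM device for symmetric stabilised forms, so you may wish to adopt it and drop the $H^1$-stability step altogether.
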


\begin{proof}
Diffusive part $a_{h}^{T}(u_{h},v_{h})$ and reactive part $c^{T}_{h}(u_{h},v_{h}) $ of the 
bilinear form $A^{T}(u_{h},v_{h})$ are symmetric and hence they can be view as inner product 
in VE space $V^{T}_{h}$ over each element $T$. Convective part $b^{T}_{h}(u_{h},v_{h})$ is 
not symmetric and hence we cannot bound it like diffusive part and reactive 
part, but using properties of the projection operator we can simply bound it. Hence 
we conclude
\begin{eqnarray}
a^{T}_{h}(u_{h},v_{h}) & \leq  &(a^{T}_{h}(u_{h},u_{h}))^{\frac{1}{2}} (a^{T}_{h}(v_{h},v_{h}))^{\frac{1}{2}} \nonumber\\
                       & \leq  &\alpha^{\ast} (a^{T}(u_{h},u_{h}))^{\frac{1}{2}} (a^{T}(v_{h},v_{h}))^{\frac{1}{2}} \nonumber \\
                       & \leq & \alpha^{\ast} \| \mathbf{K} \|_{\infty} \| \nabla u_{h} \|_{0,T} \| \nabla v_{h} \|_{0,T}
                       \label{tempb10}
\end{eqnarray} 

similarly 
\begin{equation}
c^{T}_{h}(u_{h},v_{h}) \leq \gamma^{\ast} \| c \|_{\infty} \| u_{h} \|_{0,T} \| v_{h} \|_{0,T}
\end{equation}

\begin{eqnarray}
b^{T}_{h}(u_{h},v_{h}) & = &  \frac{1}{2} \Big( \int_{T}\boldsymbol{\beta} \, \Pi_{k-1}(\nabla u_{h}) \, \Pi_{k}(v_{h}) \, dT \nonumber \\
                        &- &\int \boldsymbol{\beta} \cdot \Pi_{k-1}(\nabla v_{h}) \, \Pi_{k}(u_{h})  \, dT \nonumber\\            
                        & - &\int_{T}(\nabla \cdot \boldsymbol{\beta}) \, \Pi_{k}(u_{h}) \, \Pi_{k}(v_{h}) \, dT \Big) \nonumber 
\end{eqnarray}
Again
\begin{equation}
 \int_{T}\boldsymbol{\beta} \, \Pi_{k-1}(\nabla u_{h}) \, \Pi_{k}(v_{h}) \, dT \leq C \, \|\boldsymbol{\beta}\| _{\infty} \|\nabla u_{h} \|_{0,T} \|v_{h} \|_{0,T}
\end{equation}

\begin{equation}
\int_{T} \boldsymbol{\beta} \cdot \Pi_{k-1}(\nabla v_{h}) \, \Pi_{k}(u_{h})  \, dT \leq C \, \|\boldsymbol{\beta} \| _{\infty} \| \nabla v_{h} \|_{0,T} \| u_{h} \|_{0,T}
\end{equation}

\begin{equation}
\int_{T}(\nabla \cdot \boldsymbol{\beta}) \, \Pi_{k}(u_{h}) \, \Pi_{k}(v_{h}) \, dT \leq C \, \| \boldsymbol{\beta} \|_{1,\infty} \| u_{h} \|_{0,T} \| v_{h} \|_{0,T}
\end{equation}

Thus, 
\begin{eqnarray}
A^{T}_{h}(u_{h},v_{h}) & = & a^{T}_{h}(u_{h},v_{h}) +b^{T}_{h}(u_{h},v_{h})+c^{T}_{h}(u_{h},v_{h}) \nonumber\\
                       & \leq & \alpha^{\ast} \| \mathbf{K} \|_{\infty} \| \nabla u_{h} \|_{0,T} \| \nabla v_{h} \|_{0,T} \nonumber \\
                       & + & C \| \boldsymbol{\beta} \|_{1,\infty} \| u_{h} \|_{1,T} \| v_{h} \|_{0,T} \nonumber \\ 
                       & + &\gamma^{\ast} \| c \|_{\infty} \| u_{h} \|_{0,T} \| v_{h} \|_{0,T}  \nonumber\\
                       & \leq & C_{T} \|v_{h} \|_{1,T} \| u_{h} \|_{1,T}
\end{eqnarray}

\begin{eqnarray}
A_{h}(u_{h},v_{h}) & = & \sum_{T} A^{T}_{h}(u_{h},v_{h}) \nonumber \\
                   & \leq  & \sum_{T} C_{T} \|v_{h} \|_{1,T} \| u_{h} \|_{1,T} \nonumber \\
                   & \leq & C\left(\sum_{T} \| u_{T} \|^{2}_{1,T}\right)^{\frac{1}{2}} \left(\sum_{T} \| v_{h} \|^{2}_{1,T}\right)^{\frac{1}{2}} \nonumber \\
                   & = & C \, \| u_{h} \|_{1,h} \| v_{h} \|_{1,h}
                   \label{tempb11}
\end{eqnarray}
 Therefore the bilinear form is continuous. 
\end{proof}
The bilinear form $A^{T}_{h}$  is  $\textit{discrete coercive}$ and 
$\textit{bounded or continuous} $ in discrete norm $\| \cdot \|_{1,h}$ on 
nonconforming virtual element space $ V^{k}_{h}$, defined in equation (\ref{temp14}). Hence the bilinear 
form has unique solution in $V^{k}_{h,g}$ by Banach Necas Babuska(BNB) 
theorem \cite{di2011mathematical}.

\section{Convergence analysis and apriori error analysis in $\parallel.\parallel_{1,h}$ norm }
\label{step5}
In this section we reveal nonconforming convergence analysis of discrete solution 
$u_{h} \in V^{k}_{h}$ which satisfy the discrete bilinear form equation (\ref{temp10}). The basic 
idea is same as non conforming error analysis of convection diffusion problem proposed by 
Tobiska et al \cite{john1997nonconforming,knobloch2003p}. It is well known that 
non conforming virtual element space 
$V^{k}_{h} \subset H^{1,nc}(\tau_{h};k)\nsubseteq H^{1}(\Omega)$ and introduce 
consistency error. The finite element solution $ v_{h} \in V^{k}_{h}$ is not continuous 
along interior edge $e$ except certain points which implies an additional jump 
term $ J(u,v_{h})$ defined in equation (\ref{temp28}). Nonconforming virtual 
element space $V^{k}_{h}$ satisfy 'patch-test'\cite{irons1972experience} of order $k$. Using 
this property we can easily bound the jump term.

\begin{thm}
Let $u$ be the exact solution problem (\ref{temp3}) with polynomial 
coefficients $\mathbf{K}, \boldsymbol{\beta}, c$ . Let $u_{h} \in V^{k}_{h}$ be the solution 
of the non conforming virtual element approximation(\ref{temp10}). Let $f \in L^2({\Omega})$ 
and $ u \in H^{k+1}(\Omega)$ \,($k \geq 1$) then 
\begin{equation}
\| u-u_{h} \|_{1,h} \leq C h^{k} \| u \|_{k+1,h} +C \, h \, \vert f \vert_{0,h}
\end{equation}
where $\| \cdot \|_{1,h} $ denote broken norm in the space $\mathbf{H}^{1,NC}(\tau_{h},k)$. 
\end{thm}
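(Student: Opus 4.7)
The plan is to carry out a classical Strang-type non-conforming error analysis. Let $u_I \in V_h^k$ denote the interpolant of $u$ appearing in (\ref{temp15}) and decompose $u - u_h = (u - u_I) + (u_I - u_h) =: \rho + \delta_h$. The first component is controlled directly by (\ref{temp15}): summing the local estimate over $\tau_h$ gives $\|\rho\|_{1,h} \le C h^k \|u\|_{k+1,\tau_h}$. The whole problem therefore reduces to bounding the discrete error $\delta_h \in V_h^k$ in the broken $H^1$-norm.

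To handle $\delta_h$, I would invoke the discrete coercivity (\ref{temp31}) together with the fact that $u_h$ solves (\ref{temp10}):
\begin{equation*}
\alpha\,\|\delta_h\|_{1,h}^{2} \;\le\; A_h(\delta_h,\delta_h) \;=\; A_h(u_I,\delta_h) - \langle f_h,\delta_h\rangle .
\end{equation*}
Introducing an elementwise polynomial approximation $u_\pi|_T \in \mathbb{P}^k(T)$ of optimal order and inserting $\pm u_\pi$ and $\pm u$, one obtains the decomposition
\begin{equation*}
A_h(u_I,\delta_h) - \langle f_h,\delta_h\rangle \;=\; A_h(u_I - u_\pi,\delta_h) + \bigl[A_h(u_\pi,\delta_h) - A(u_\pi,\delta_h)\bigr] + A(u_\pi - u,\delta_h) + \bigl[A(u,\delta_h) - \langle f,\delta_h\rangle\bigr] + \langle f - f_h,\delta_h\rangle ,
\end{equation*}
where $A$ is interpreted element-by-element (a sum of the contributions $A^T$ over $T \in \tau_h$). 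The bracketed polynomial-consistency term vanishes thanks to the element-wise consistency lemma applied to each $u_\pi|_T \in \mathbb{P}^k(T)$.

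The remaining four contributions are treated in turn. Using the continuity of $A_h$ established in Lemma \ref{41} and the triangle inequality $\|u_I - u_\pi\|_{1,h} \le \|u_I - u\|_{1,h} + \|u - u_\pi\|_{1,h} \le C h^k \|u\|_{k+1,\tau_h}$, the first term is bounded by $C h^k \|u\|_{k+1,\tau_h}\|\delta_h\|_{1,h}$. The broken-norm counterpart of the continuity of $A$ provides the same bound for $A(u_\pi - u,\delta_h)$. Element-wise integration by parts combined with the strong form of (\ref{temp2}) identifies $A(u,\delta_h) - \langle f,\delta_h\rangle$ with a jump expression of exactly the type $J_h(u,\delta_h)$ of (\ref{temp28}) (up to additional edge contributions produced by the skew-symmetrized convective term), so that the jump estimate (\ref{tempb6}) yields the bound $C h^k \|u\|_{k+1,\Omega} |\delta_h|_{1,h}$. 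Finally, the right-hand-side discrepancy $\langle f - f_h,\delta_h\rangle$ is controlled by (\ref{temp25}) for $k \ge 2$ and by the modified estimate (\ref{temp27}) for $k=1$, both producing $C h |f|_{0,h} |\delta_h|_{1,h}$. Collecting the four bounds, cancelling one factor of $\|\delta_h\|_{1,h}$, and combining with $\|\rho\|_{1,h}$ via the triangle inequality delivers the claimed estimate.

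The main obstacle I anticipate is the careful bookkeeping of the boundary contributions generated by the skew-symmetrized convective form $b^{\text{conv}}_T$: integrating $\int_T (\boldsymbol{\beta}\cdot\nabla\delta_h)\,u$ by parts on each element produces extra interior-edge integrals that do not appear in the diffusive jump $J_h$ of (\ref{temp28}). One must verify that, for smooth $u$ and continuous $\boldsymbol{\beta}$, these extra contributions assemble into a jump of $\boldsymbol{\beta}\,u\,\delta_h$ across interior edges that is amenable to the same patch-test argument used for $J_h$, so that their combined effect is again of order $h^k \|u\|_{k+1,\Omega} |\delta_h|_{1,h}$ and does not degrade the final rate.
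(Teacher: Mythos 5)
Your proposal follows essentially the same route as the paper's own proof: the splitting $u-u_h=(u-u_I)+(u_I-u_h)$, discrete coercivity applied to $\delta_h$, insertion of $\pm u_\pi$ and $\pm u$ with the polynomial-consistency bracket vanishing, the jump/patch-test treatment of the nonconformity error (including the extra edge terms from the skew-symmetrized convective form, which the paper handles exactly as you anticipate in (\ref{tempn1})), and the $f-f_h$ bound via (\ref{temp25}) and (\ref{temp27}). The argument is correct and matches the paper's structure step for step.
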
  

\begin{proof}
We consider $u_{I}$ be the approximation of $u$ in $V^{k}_{h}$ and $u_{\Pi}$ be 
polynomial approximation of $u$ in $\mathbb{P}^{k}(\tau_{h})$.

Define $ \delta:=u_{h}-u_{I}$.
using coercivity of the virtual element  form, we can write

\begin{eqnarray}
\alpha \| \delta \|^{2}_{1} & \leq & A_{h}(\delta,\delta) \nonumber \\
                                          & = & A_{h}(u_{h},\delta)-A_{h}(u_{I},\delta) \nonumber \\
                                          & = & <f_{h},\delta>-A_{h}(u_{I},\delta) \nonumber \\
                                          & = & <f_{h},\delta>-\sum_{T}A^{T}_{h}(u_{I},\delta) 
                                          \label{temp42}
\end{eqnarray} 
Now we shall analyse the local bilinear form $A^{T}_{h}(u_{I},\delta) $ term by term

\begin{eqnarray}
A^{T}_{h}(u_{I},\delta) & = & A^{T}_{h}(u_{I}-u_{\Pi},\delta)+A^{T}_{h}(u_{\Pi,\delta}) \nonumber \\
                        & = & A^{T}_{h}(u_{I}-u_{\Pi},\delta)+A^{T}_{h}(u_{\Pi},\delta)-A^{T}(u_{\Pi},\delta)+A^{T}(u_{\Pi},\delta)  \nonumber \\
                        & = & A^{T}_{h}(u_{I}-u_{\Pi},\delta)+A^{T}_{h}(u_{\Pi},\delta)-A^{T}(u_{\Pi},\delta) \nonumber \\
                        & + & A^{T}(u_{\Pi}-u,\delta)+A^{T}(u,\delta) 
                        \label{tempb12}
\end{eqnarray}

Discrete bilinear form $A^{T}_{h}(u_{h},v_{h}) $ is polynomial consistent, hence 
$A^{T}_{h}(u_{\Pi},\delta)=A^{T}(u_{\Pi},\delta) $

Therefore
\begin{equation}
A^{T}_{h}(u_{I},\delta)=A^{T}_{h}(u_{I}-u_{\Pi},\delta)+A^{T}(u_{\Pi}-u,\delta)+A^{T}(u,\delta)
\label{tempd3}
\end{equation}

 
Now using Green's theorem on the triangle $T$, we get 
\begin{eqnarray}
 \int_{T} (\boldsymbol{\beta} \cdot \nabla u) \, \delta \, dT &= &\frac{1}{2} \Big(\int_{T}(\boldsymbol{\beta} \cdot \nabla u) \, \delta \, dT -\int_{T} (\boldsymbol{\beta} \cdot \nabla \delta )u  \, dT \nonumber\\
                                                 &-&\int_{T}(\nabla \cdot \boldsymbol{\beta})u \, \delta  \, dT+\int_{\partial T} (\boldsymbol{\beta} \cdot \mathbf{n}_{\partial T})u \, \delta \, ds  \Big)                                                   
                                                 \label{tempd1}
\end{eqnarray}
Rearranging terms we can write
\begin{eqnarray}
\frac{1}{2} \left(\int_{T}(\boldsymbol{\beta} \cdot \nabla u) \, \delta \, dT -\int_{T} (\boldsymbol{\beta} \cdot \nabla \delta )u -\int_{T}(\nabla \cdot \boldsymbol{\beta}) \, u \, \delta  \, dT \right) & =& \int_{T} (\boldsymbol{\beta} \cdot \nabla u)\delta \, dT \nonumber\\
                                                  &-&\frac{1}{2}\int_{\partial T} (\boldsymbol{\beta} \cdot \mathbf{n}_{\partial T})u \, \delta \, ds 
                                                  \label{tempd2}
\end{eqnarray}
 
Taking sum over all element $ T \in \tau_{h}$ we get
 
\begin{eqnarray}
 \sum_{T}A^{T}(u,\delta) & = & \sum_{T} (-\nabla(\mathbf{K} \, \nabla u)+(\boldsymbol{\beta} \cdot \nabla u)+cu) \, \delta)  \nonumber\\
                         & + &\sum_{T} \int_{\partial T} \mathbf{K} \, \nabla u \cdot \mathbf{n}_{e} \, \delta \, ds -\sum_{T}  \frac{1}{2}\int_{\partial T} (\boldsymbol{\beta} \cdot \mathbf{n}_{\partial T})u \, \delta \, ds \nonumber\\
                         & = & <f,\delta> +\sum_{e \in \varepsilon_{h}} \int_{e} (\mathbf{K} \, \nabla u \cdot \mathbf{n}_{e}) \, [\vert \delta \vert] \, ds-\frac{1}{2} \sum_{e \in \varepsilon_{h}} \int_{e} (\boldsymbol{\beta}.\mathbf{n}_{e}) \, u \, [\vert \delta \vert] 
                         \label{temp44}
\end{eqnarray}
 
In the above equation we get jump term corresponding to $\delta $ only, since $\delta$ is 
discontinuous along interior edge $ e \in \varepsilon^{0}_{h}$  
  
Globally the equation (\ref{tempd3}) can be written as
  
\begin{eqnarray}
 A_{h}(u_{I},\delta) & = & \sum_{T} A^{T}_{h}(u_{I},\delta) \nonumber \\
                     & = & \sum_{T} A^{T}_{h}(u_{I}-u_{\Pi},\delta) +\sum_{T}A^{T}(u_{\Pi}-u,\delta) \nonumber \\
                     & + & <f,\delta> +\sum_{e \in \varepsilon_{h}} \int_{e}(\mathbf{K} \, \nabla u \cdot \mathbf{n}_{e}) \, [\vert \delta \vert] \, ds -\frac{1}{2}  \sum_{e \in \varepsilon_{h}} \int_{e} (\boldsymbol{\beta} \cdot \mathbf{n}_{e}) \, u \, [\vert \delta \vert] 
                     \label{temp45}
\end{eqnarray}
Let us denote 
\begin{eqnarray}
 M_{1} & =  & A^{T}_{h}(u_{I}-u_{\Pi},\delta) \nonumber \\
 M_{2} & = & A^{T}(u_{\Pi}-u,\delta) \nonumber
\end{eqnarray}

\begin{equation}
 \vert A^{T}_{h}(u_{I}-u_{\Pi},\delta) \vert \leq C \| u_{I}-u_{\Pi} \|_{1,T} \| \delta \|_{1,T} 
 \label{temp46}
\end{equation}

\begin{equation}
\vert A^{T}(u_{\Pi}-u,\delta) \vert \leq C \| u_{\Pi}-u \|_{1,T} \| \delta \|_{1,T}
\label{temp47}
\end{equation}

using (\ref{temp46}), (\ref{temp47}) and interpolation error estimation (\ref{temp15}) we 
can bound 
 
\begin{eqnarray}
 \vert M_{1} \vert +\vert M_{2} \vert & \leq  & C \, \|\delta \, \|_{1,T} (\| u_{I}-u_{\Pi} \|_{1,T}+ \|u_{\Pi}-u \|_{1,T} )  \nonumber\\
                                       & \leq & C \, \| \delta \, \|_{1,T} \, h^{k} \, \| u \|_{k+1,T}
                                       \label{temp48}  
\end{eqnarray}
 
we use equation (\ref{temp25}) for $k\geq2$ and (\ref{temp27}) for $k=1$ to bound the 
right hand side
\begin{eqnarray}
 \vert <f_{h},\delta>-<f,\delta> \vert  & = & \left| \sum_{T} \int_{T} (f_{h}-f) \, \delta \right| \nonumber\\
                                        & \leq & C h^{k} \vert f \vert_{k-1,\tau_{h}} \vert \delta \vert_{1,h} 
                                        \label{temp49}
\end{eqnarray}
 
In particular for $k=1$
\begin{equation*}
 \left| <f_{h},\delta> - <f,\delta> \right| \leq C \, h \, |f|_{0,\tau_{h}} \, \delta_{1,h}
\end{equation*}

 using  estimation (\ref{tempb6}) we bound consistency error 
\begin{eqnarray}
 \left| \int_{e} (\mathbf{K} \, \nabla u \cdot \mathbf{n}_{e}) [\vert\delta\vert]  \right| & \leq & \| \mathbf{K} \|_{\infty} \vert \int_{e} (\nabla u \cdot \mathbf{n}_{e}) [\vert\delta\vert] \vert \nonumber \\
                                                                   & \leq & C h^{k} \| u \|_{k+1,T^{+}\cup T^{-}} \vert \delta \vert_{1,T^{+}\cup T^{-}} \nonumber 
\end{eqnarray}
Edge $e$ is an interior edge shared by triangles $T^{+}$ and $T^{-}$. Therefore
\begin{equation}
 \left| \sum_{e \in \varepsilon_{h}} \int_{e}(\mathbf{K} \, \nabla u \cdot \mathbf{n}_{e}) [\vert\delta \vert] \right| \leq C \, h^{k} \, \| u \|_{k+1,h} \|\delta \|_{1,h}
 \label{tempm1}
\end{equation}
 again 
\begin{eqnarray}
 \left| \sum_{e\in \varepsilon_{h}} \int_{e} (\boldsymbol{\beta} \cdot \mathbf{n}_{e}) \, u \, [\vert \delta \vert]\right| & \leq &
  \left| \sum_{e \in \varepsilon_{h}} \| \boldsymbol{\beta} \cdot \mathbf{n}_{e} \|_ {\infty} \int_{e} u \, [\vert \delta \vert] \ \right| \nonumber\\
  & \leq & C \, \left| \sum_{e \in \varepsilon_{h}} \int_{e}(u-P^{k-1}(u)) \, [\vert \delta \vert] \ \right| \nonumber\\
  & = & C \, \left| \sum_{e \in \varepsilon_{h}} \int_{e}(u-P^{k-1}(u)) \, ([\vert \delta \vert]-P^{0}([\vert \delta \vert])) \ \right| \nonumber\\
  & \leq & C \sum_{e \in \varepsilon_{h}} \| u-P^{k-1}(u) \|_{0,e} \|[\vert \delta \vert]-P^{0}([\vert \delta \vert]) \|_{0,e} \nonumber
\end{eqnarray}
 
using standard approximation \cite{ciarlet1991basic}
\begin{eqnarray}
 \| u-P^{k-1}(u) \|_{0,e} & \leq & C h^{\text{min}(k,s)-1/2} \| u \|_{s,T^{+}\cup T^{-}} \nonumber \\
 \|[\vert \delta \vert]-P^{0}([\vert \delta \vert]) \|_{0,e} & \leq & C h^{1/2} \| \delta \|_{1,T^{+}\cup T^{-}} \nonumber
\end{eqnarray}
 
we can bound 
 
\begin{eqnarray}
\left| \sum_{e\in \varepsilon_{h}} \int_{e} (\boldsymbol{\beta} \cdot \mathbf{n}_{e})u [\vert \delta \vert] \right| &\leq & C h^{\text{min}(k,s)}\sum_{T} \| u \|_{s,T} \| \delta \|_{1,T} \nonumber\\
 & \leq & C h^{\text{min}(k,s)} \| u \|_{s+1,h} \| \delta \|_{1,h}  \nonumber
\end{eqnarray}
 
In particular for $s=k$
\begin{equation}
\left| \sum_{e\in \varepsilon_{h}} \int_{e} (\boldsymbol{\beta} \cdot \mathbf{n}_{e}) \, u \, [\vert \delta \vert]\right| \leq C \, h^{k} \, \| u \|_{k+1,h} \| \delta \|_{1,h} 
\label{tempn1}
\end{equation}
 
Using (\ref{temp48}), (\ref{temp49}), (\ref{tempm1}), (\ref{tempn1}) we bound
\begin{eqnarray}
 \alpha \| \delta \|_{1,h}^{2} & \leq &  (C h^{k} \|u \|_{k+1,h}+C \, h \, \vert f \vert_{0,h}) \| \delta \|_{1,h}\nonumber\\
 \| \delta \|_{1,h} & \leq & C h^{k} \| u \|_{k+1,h} +C h \vert f \vert_{0,h} \label{tempn2}
\end{eqnarray}
 
We can write 
\begin{equation} 
 \|{u-u_{h}} \|_{1,h}\leq \|(u-u^{I}) \|_{1,h}+ \|(u^{I}-u_{h}) \|_{1,h}
\end{equation}

The first term can be estimated using the standard approximation (\ref{temp15}) 
and second term can be estimated using (\ref{tempn2}). Hence we obtain
\begin{equation}
\| u-u_{h} \|_{1,h} \leq C h^{k} \| u \|_{k+1,h}+C \, h \, \vert f \vert_{0,h}
\end{equation}
\end{proof}

\section{Conclusions}
\label{last1}
In this work we presented the analysis of nonconforming virtual element
method for convection diffusion reaction equation with polynomial
coefficients using $L^2$ projection. $L^2$ projection can be partially computed using 
degrees of freedom of the finite element. The external virtual element method using 
$L^2$ projection operator is not fully computable which may be considered
as a drawback of this method. We have proved stability of the method
assuming that the model problem is diffusion dominated. If the model
problem is convection dominated then the present analysis is not applicable
and hence we require a new framework for convection dominated problem, which
may be carried out as a future work.




\bibliographystyle{plain}
\bibliography{nonvirtl_elmt}

\end{document}